\documentclass[11pt]{article}
\usepackage{amsfonts, amssymb, amsmath, amsthm, xcolor, graphicx, pstricks, pstricks-add}
\oddsidemargin 0pt \evensidemargin 0pt \marginparwidth 40pt
\marginparsep 10pt \topmargin 0pt \headsep 10pt \textheight 9in
\textwidth 6.7in \setlength{\parskip}{1ex plus 0.5ex minus 0.2ex}
\pagestyle{plain}
\begin{document}

\theoremstyle{plain}
\newtheorem{thm}{Theorem}[section]
\newtheorem{cor}[thm]{Corollary}
\newtheorem{con}[thm]{Conjecture}
\newtheorem{cla}[thm]{Claim}
\newtheorem{lm}[thm]{Lemma}
\newtheorem{prop}[thm]{Proposition}
\newtheorem{example}[thm]{Example}

\theoremstyle{definition}
\newtheorem{dfn}[thm]{Definition}
\newtheorem{alg}[thm]{Algorithm}
\newtheorem{prob}[thm]{Problem}
\newtheorem{rem}[thm]{Remark}

\newcommand{\E}{\textnormal{Ex}}

\renewcommand{\baselinestretch}{1.1}

\title{\bf A generalization of the extremal function of the Davenport-Schinzel sequences}
\author{
Cheng Yeaw Ku
\thanks{ Department of Mathematics, National University of
Singapore, Singapore 117543. E-mail: matkcy@nus.edu.sg} \and Kok
Bin Wong \thanks{
Institute of Mathematical Sciences, University of Malaya, 50603
Kuala Lumpur, Malaysia. E-mail:
kbwong@um.edu.my.} } \maketitle

\begin{abstract}\noindent
Let $[n]=\{1, \ldots, n\}$. A sequence $u=a_1a_2\dots a_l$ over $[n]$ is called $k$-sparse if $a_i = a_j$, $i > j$ implies $i-j\geq k$. In other words, every consecutive subsequence of $u$ of length at most $k$ does not have letters in common. Let $u,v$ be two sequences. We say that $u$ is $v$-free, if  $u$ does not contain a subsequence isomorphic to $v$. Suppose there are only $k$ letters appearing in $v$. The extremal function $\E(v,n)$ is defined as the maximum length of all the $v$-free and $k$-sparse sequences. In this paper, we study a generalization of the extremal function  $\E(v,n)$.
\end{abstract}

\bigskip\noindent
{\sc keywords:} Davenport-Schinzel sequences, pattern avoidance

\section{Introduction}

Let $[n]=\{1, \ldots, n\}$. An element in $[n]$ shall be called a \emph{letter}. An element $u$ is said to be a \emph {sequence} of \emph{length} $l$ over $[n]$, if $u=a_1a_2\dots a_l$, $a_i\in [n]$ for all $i$. We shall denote the length of $u$ by $\vert u\vert$. For each $i\in [n]$, let
\begin{equation}
\sigma_i(u)=\sum_{\substack{1\leq j\leq l,\\ a_j=i}} 1.\notag
\end{equation}
Note that $\sigma_i(u)$ is the number of times the letter $i$ appears in the sequence $u$. We shall denote the number of distinct letters that appear in $u$ by $\vert\vert u\vert\vert$. Note that
\begin{equation}
\vert\vert u\vert\vert=\vert \{ i\in [n]\ :\ \sigma_{i}(n)>0\}\vert,\notag
\end{equation}
where $\vert A\vert$ denote the number of elements in the set $A$.  For examples $\vert 121331\vert=6$, $\sigma_1(121331)=3$, and $\vert\vert 121331\vert\vert=3$.

Two sequences $u=a_1a_2\dots a_l$ and $v=b_1b_2\dots b_{l'}$ are said to be \emph{isomorphic}, if $l=l'$ and there is a bijection $\beta : [n] \to [n]$ such that $\beta (a_j)=b_j$ for all $j$. For example $121331$ and $242112$ are isomorphic. A sequence $u$ is said to be  \emph{normal}  over $[n]$, if $\sigma_i(u)>0$ for all $i\in [n]$, and
the first occurrences of $1,2,\dots, n$ in $u$, if we scan $u$ from left to right, come in this order. Let $\mathcal S(n)$ be the set of all sequences over $n$. Clearly, every element in $\mathcal S(n)$ is isomorphic to a unique normal sequence. For example $121331$ is normal but $242112$ is not.

A sequence $v$ is said to be a \emph{subsequence} of $u=a_1a_2\dots a_l$, if $v=a_{l_1}a_{l_2}\dots a_{l_j}$ where $1\leq l_1<l_2<\cdots <l_j\leq l$. It is called a
\emph{consecutive subsequence} if $l_{i+1}=l_{i}+1$ for all $1\leq i\leq j-1$. For example, $121331$ and $513435$ are subsequences of $144255134351$. Furthermore, $513435$ is a consecutive subsequence, but $121331$ is not.

A sequence $u=a_1a_2\dots a_l$ is called $k$-\emph{sparse} if $a_i = a_j$, $i > j$ implies $i-j\geq k$. In other words, for every consecutive subsequence $v$ of $u$ of length at
most $k$, we have $\sigma_i(v)\leq 1$ for all $i$.

Let $u$ and $v$ be two sequences. We say that $u$ contains $v$ and write $u\supset v$,  if $u$ has a
subsequence isomorphic to $v$. If $u$ does not contain $v$, we say that $u$ is $v$-\emph{free}. We shall denote the set of letters appearing in $u$ by $L(u)$, i.e.,
\begin{equation}
L(u)=\{ i\in [n]\ :\ \sigma_{i}(u)>0\}.\notag
\end{equation}

Let $\mathbb N$ and $\mathbb R$ be the sets of positive integers and real numbers, respectively. Let $f,g : \mathbb N\to \mathbb R$ be two functions. The asymptotic notation
$f\ll g$ is synonymous to the $f = O(g)$ notation and means that there exists a positive integer $n_0$ and a constant $c>0$ such that
$\vert f(n)\vert \leq  c\vert g(n)\vert$ holds for every $n \geq  n_0$. The
subscripts, such as $f\ll_k g$, indicate that $c$ depends only on the parameter $k$. We say $f,g$ have the same \emph{asymptotic order} if $f = O(g)$ and $g = O(f)$.

Let $v\in \mathcal S(n)$. We associate with $v$ the extremal functions
\begin{equation}
\E(v,n)=\max \left\{ \vert u\vert\ :\ \textnormal{$u$ is $v$-free and $\vert\vert v\vert\vert$-sparse, and $\vert\vert u\vert\vert\leq n$} \right \},\notag
\end{equation}
\begin{equation}
\E(v,k,n)=\max \left\{ \vert u\vert\ :\ \textnormal{$u$ is $v$-free and $k$-sparse, and $\vert\vert u\vert\vert\leq n$} \right \}.\notag
\end{equation}
Note that $\E(v,n)=\E(v,\vert \vert v\vert\vert,n)$. The extremal function $\E(v,k,n)$ was first introduced by Adamec, Klazar and Valtr \cite{AKV}, and it generalized the Davenport-Schinzel sequences \cite{DS}. Two surveys have been written by Klazar \cite{Klazar2, Klazar3} on this topic (see also \cite{CFZ,CK, JM, JMS, LP, MSS, Pettie1, Pettie2} for related results). We note here that $\E(v, k, n)$ is always well-defined. In fact, Klazar \cite{Klazar} showed that $\E(v, n)\ll_v n^2$, and Adamec, Klazar and Valtr \cite{AKV} showed that
\begin{equation}\label{relation_in}
\E(v,n)\ll_{v,k}\E(v,k,n)\leq \E(v,n).
\end{equation}

Note that a sequence $u=a_1a_2\dots a_l$ is $k$-sparse if and only if for each $j$, $1\leq j\leq l$, the subsequence
\begin{equation}
a_{j}a_{j+1}\dots a_{j+(k-1)},\notag
\end{equation}
does not have any letter in common, i.e., $\sigma_i(a_{j+1}a_{j+2}\dots a_{j+k})\leq 1$ for all $i$. This motivates us to consider $(k,r)$-\emph{sparse} sequences, that is a sequence $u=a_1a_2\dots a_l$ is called $(k,r)$-sparse if and only if for each $j$, $1\leq j\leq l$, the subsequence
\begin{equation}
a_{j}a_{j+r}\dots a_{j+(k-1)r},\notag
\end{equation}
does not have any letter in common. Let $v\in \mathcal S(n)$. We associate with $v$ the extremal functions
\begin{equation}
\E_r(v,n)=\max \left\{ \vert u\vert\ :\ \textnormal{$u$ is $v$-free and $(\vert\vert v\vert\vert,r)$-sparse, and $\vert\vert u\vert\vert\leq n$} \right \},\notag
\end{equation}
\begin{equation}
\E_r(v,k,n)=\max \left\{ \vert u\vert\ :\ \textnormal{$u$ is $v$-free and $(k,r)$-sparse, and $\vert\vert u\vert\vert\leq n$} \right \}.\notag
\end{equation}

Note that $\E_1(v,k,n)=\E(v,k,n)$.  In this paper, we study  the extremal function $\E_r(v,k,n)$. We shall always assume that $k\geq \vert\vert v\vert\vert$. This is because if $k< \vert\vert v\vert\vert$ and $n\geq k$, then $\E_r(v,k,n)=\infty$. For instance, the sequence $1^r2^r\dots k^r1^r2^r\dots k^r1^r2^r\dots $ is  $v$-free and $(k,r)$-sparse. Here
\begin{equation}
a^r=\overbrace{a\dots a}^{r\ \textnormal{times}}.\notag
\end{equation}

\section{Relations between $\E_r(v,k,n)$ and $\E_1(v,k,n)$}

Recall that we shall always assume $k\geq \vert\vert v\vert\vert$.

\begin{thm}\label{thm_relation1} $\E_1(v,(k-1)r+1,n)\leq  \E_r(v,k,n)\leq r\E_1(v,k,n)$.
\end{thm}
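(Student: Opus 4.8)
The plan is to prove the two inequalities separately, each by exhibiting an appropriate witness sequence. The left inequality follows from a purely combinatorial comparison of the two sparseness notions, while the right inequality follows by decomposing a $(k,r)$-sparse sequence into $r$ ordinary $k$-sparse subsequences according to the residue of the position index modulo $r$. Throughout I use the identity $\E_1(v,k,n)=\E(v,k,n)$ noted in the introduction.

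For the lower bound $\E_1(v,(k-1)r+1,n)\leq \E_r(v,k,n)$, I would first observe that every $((k-1)r+1)$-sparse sequence is automatically $(k,r)$-sparse. Indeed, for a fixed $j$ the $k$ positions $j, j+r, \dots, j+(k-1)r$ all lie inside the window of $(k-1)r+1$ consecutive positions $j, j+1, \dots, j+(k-1)r$; since a $((k-1)r+1)$-sparse sequence has pairwise distinct letters on every such window, the $k$ selected positions in particular carry distinct letters, which is exactly the $(k,r)$-sparse requirement. Now take a sequence $u$ attaining $\E_1(v,(k-1)r+1,n)$, so that $u$ is $v$-free, $((k-1)r+1)$-sparse, and $\vert\vert u\vert\vert\leq n$. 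By the observation $u$ is also $(k,r)$-sparse, hence a feasible sequence in the definition of $\E_r(v,k,n)$, and therefore $\E_1(v,(k-1)r+1,n)=\vert u\vert\leq \E_r(v,k,n)$.

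For the upper bound $\E_r(v,k,n)\leq r\E_1(v,k,n)$, I would take a sequence $u=a_1a_2\dots a_l$ attaining $\E_r(v,k,n)$ and split it into $r$ subsequences $u_1,\dots,u_r$, where $u_s$ collects the entries in positions $s, s+r, s+2r, \dots$. Each $u_s$ is $v$-free, being a subsequence of the $v$-free sequence $u$, and it uses at most $\vert\vert u\vert\vert\leq n$ distinct letters. The key point is that each $u_s$ is $k$-sparse: any $k$ consecutive entries of $u_s$ occupy positions $s+tr, s+(t+1)r, \dots, s+(t+k-1)r$ of $u$ for some $t$, which are precisely the $k$ positions governed by the $(k,r)$-sparse condition with starting index $j=s+tr$, and so they carry distinct letters. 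Hence $\vert u_s\vert\leq \E_1(v,k,n)$ for each $s$, and summing over the residue classes gives $\E_r(v,k,n)=\vert u\vert=\sum_{s=1}^{r}\vert u_s\vert\leq r\E_1(v,k,n)$.

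Neither step poses a serious difficulty; the main thing to get right is the bookkeeping of indices, namely that the window of $(k-1)r+1$ consecutive positions exactly contains the arithmetic progression of $k$ positions in the first part, and that partitioning $u$ by residue modulo $r$ converts the $(k,r)$-sparse condition into ordinary $k$-sparseness of each piece in the second part. A minor point of care arises at the boundary, where an arithmetic progression may extend past position $l$; there the relevant sparseness condition is vacuous or involves only the existing terms, so the argument is unaffected.
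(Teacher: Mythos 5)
Your proof is correct and follows essentially the same approach as the paper: the left inequality via the observation that $((k-1)r+1)$-sparseness implies $(k,r)$-sparseness, and the right inequality via the observation that the positions in arithmetic progression with common difference $r$ form a $v$-free, $k$-sparse subsequence. The only cosmetic difference is in the second half: you partition $u$ into all $r$ residue classes modulo $r$ and sum their lengths, whereas the paper extracts only the single subsequence on positions $1, 1+r, 1+2r,\dots$ of length $m$ and then deduces $l\leq rm$ by counting; both yield the same bound $r\E_1(v,k,n)$.
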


\begin{proof} The first inequality follows by noting that a $v$-free and $((k-1)r+1)$-sparse sequence $u$ with $\vert\vert u\vert\vert\leq n$ is also $v$-free and $(k,r)$-sparse.

Let $a_i\in [n]$, and $u=a_1a_2\dots a_l$ be a $v$-free and $(k,r)$-sparse sequence with $\vert\vert u\vert\vert\leq n$. Consider the subsequence
$w=b_1b_2\dots b_m$ of $u$, where $b_i=a_{1+(i-1)r}$ and $l-r+1\leq 1+(m-1)r\leq l$. Note that $w$ is $v$-free and $k$-sparse. Thus, $m\leq \E_1(v,k,n)$.
From $l-r+1\leq 1+(m-1)r$, we obtain $l\leq rm$. Therefore $l\leq r\E_1(v,k,n)$ and $\E_r(v,k,n)\leq r\E_1(v,k,n)$.
\end{proof}

The following corollary is a generalization of equation (\ref{relation_in}).

\begin{cor}\label{cor_inequality1} $\E_r(v,n)\ll_{r,v,k}\E_r(v,k,n)\leq \E_r(v,n)$.
\end{cor}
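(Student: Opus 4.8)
The plan is to deduce both halves of the corollary formally from Theorem~\ref{thm_relation1} together with the known relation (\ref{relation_in}), reducing the problem entirely to the classical $r=1$ setting. Throughout I write $s=\vert\vert v\vert\vert$, so that by definition $\E_r(v,n)=\E_r(v,s,n)$ and $\E(v,n)=\E_1(v,s,n)$, and I recall the standing assumption $k\geq s$.

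For the right-hand inequality $\E_r(v,k,n)\leq\E_r(v,n)$, I would use monotonicity in the sparsity parameter. Since $k\geq s$, every $(k,r)$-sparse sequence is also $(s,r)$-sparse: if $a_ja_{j+r}\cdots a_{j+(k-1)r}$ has no repeated letter, then neither does its truncation $a_ja_{j+r}\cdots a_{j+(s-1)r}$. Thus the sequences admissible in the definition of $\E_r(v,k,n)$ form a subfamily of those admissible for $\E_r(v,s,n)=\E_r(v,n)$, and passing to maxima gives the inequality.

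For the left-hand inequality I would chain the two bounds of Theorem~\ref{thm_relation1} through the $r=1$ world. Applying the upper bound of Theorem~\ref{thm_relation1} with sparsity parameter $s$ gives $\E_r(v,n)=\E_r(v,s,n)\leq r\E_1(v,s,n)=r\E(v,n)$. I then invoke (\ref{relation_in}), but instantiated at the parameter $(k-1)r+1$ in place of $k$; since $(k-1)r+1\geq k\geq s$ this is legitimate, and it yields $\E(v,n)\ll_{v,(k-1)r+1}\E_1(v,(k-1)r+1,n)$. Finally the lower bound of Theorem~\ref{thm_relation1} states $\E_1(v,(k-1)r+1,n)\leq\E_r(v,k,n)$. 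Splicing these three facts gives $\E_r(v,n)\leq r\E(v,n)\ll r\E_1(v,(k-1)r+1,n)\leq r\E_r(v,k,n)$, which is exactly $\E_r(v,n)\ll_{r,v,k}\E_r(v,k,n)$.

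The one point I would handle carefully is the bookkeeping of the implied constant, which is also where the only real idea lies. The middle step imports its constant from (\ref{relation_in}), whose dependence is on $v$ and the chosen sparsity parameter; because I apply it at $(k-1)r+1$, that dependence is in fact a dependence on $v$, $k$ and $r$, matching the subscript $\ll_{r,v,k}$. The choice of the parameter $(k-1)r+1$ is forced rather than lucky: it is precisely the value appearing in the first inequality of Theorem~\ref{thm_relation1}, so it is what allows the $r=1$ estimate to be transported back to $\E_r(v,k,n)$ and makes the upper and lower bounds of that theorem dovetail. Once this alignment is noticed, no combinatorial difficulty remains and the statement follows as a formal corollary.
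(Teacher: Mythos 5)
Your proof is correct and follows essentially the same route as the paper: both obtain the lower bound by chaining $\E_r(v,n)\leq r\E_1(v,n)$ (the upper bound of Theorem~\ref{thm_relation1} at sparsity $\vert\vert v\vert\vert$) with equation~(\ref{relation_in}) instantiated at $(k-1)r+1$ and then with the lower bound $\E_1(v,(k-1)r+1,n)\leq\E_r(v,k,n)$. The only difference is that you spell out the monotonicity argument for the second inequality, which the paper dismisses as obvious.
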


\begin{proof} By Theorem \ref{thm_relation1}, $\E_1(v,(k-1)r+1,n)\leq  \E_r(v,k,n)$. By equation (\ref{relation_in}), $\E_1(v,n)\ll_{r,v,k}\E_1(v,(k-1)r+1,n)$. It then follows from Theorem \ref{thm_relation1} that $\frac{\E_r(v,n)}{r}\leq \E_1(v,n)$. Therefore $\E_r(v,n)\ll_{r,v,k}\E_r(v,k,n)$. The second inequality is obvious.
\end{proof}

\begin{cor}\label{cor_order} $\E_r(v,k,n)$ and $\E_1(v,n)$ have the same asymptotic order.
\end{cor}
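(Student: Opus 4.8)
The plan is to sandwich $\E_r(v,k,n)$ between two constant multiples of $\E_1(v,n)$, thereby showing that each function is $O$ of the other. Both bounds can be assembled directly from Theorem~\ref{thm_relation1} together with the chain~(\ref{relation_in}), so essentially no new combinatorial argument is required; the real task is to keep careful track of which parameters the implicit constants depend on.

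For the upper direction, I would start from the second inequality of Theorem~\ref{thm_relation1}, namely $\E_r(v,k,n)\leq r\,\E_1(v,k,n)$, and then apply the right-hand inequality of~(\ref{relation_in}), which gives $\E_1(v,k,n)\leq \E_1(v,n)$. Chaining these yields $\E_r(v,k,n)\leq r\,\E_1(v,n)$, so that $\E_r(v,k,n)\ll_{r,v,k}\E_1(v,n)$.

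For the lower direction, I would invoke the first inequality of Theorem~\ref{thm_relation1}, namely $\E_1(v,(k-1)r+1,n)\leq \E_r(v,k,n)$, and feed in the left-hand inequality of~(\ref{relation_in}) with the sparsity parameter taken to be $(k-1)r+1$ in place of $k$. This produces $\E_1(v,n)\ll_{v,(k-1)r+1}\E_1(v,(k-1)r+1,n)\leq \E_r(v,k,n)$. Since $(k-1)r+1$ is completely determined by $r$ and $k$, the implicit constant depends only on $r,v,k$, and we conclude $\E_1(v,n)\ll_{r,v,k}\E_r(v,k,n)$.

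Combining the two directions gives $\E_1(v,n)\ll_{r,v,k}\E_r(v,k,n)\ll_{r,v,k}\E_1(v,n)$, which is precisely the assertion that the two functions have the same asymptotic order. The only point demanding any care — and the closest thing to an obstacle — lies in the lower bound: one must apply~(\ref{relation_in}) not with the ambient sparsity $k$ but with the shifted value $(k-1)r+1$, and then confirm that the resulting parameter dependence collapses to $r,v,k$. Once that substitution is made correctly, the remaining steps are immediate.
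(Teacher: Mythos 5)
Your proposal is correct and follows essentially the same route as the paper: the lower bound via $\E_1(v,n)\ll_{v,(k-1)r+1}\E_1(v,(k-1)r+1,n)\leq \E_r(v,k,n)$ is exactly the paper's argument, and your upper bound $\E_r(v,k,n)\leq r\E_1(v,k,n)\leq \E_1(v,n)\cdot r$ is just a slightly more direct chaining of the same inequalities the paper routes through Corollary~\ref{cor_inequality1}. Your attention to the parameter dependence of the implicit constant in the lower direction is exactly the right point of care.
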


\begin{proof} The inequality $\E_r(v,k,n)=O(\E_1(v,n))$ follows from Theorem \ref{thm_relation1} and Corollary \ref{cor_inequality1}. The inequality $\E_1(v,n)=O(\E_r(v,k,n))$ follows from Theorem \ref{thm_relation1} and by noting that $\E_1(v,n)\ll_{r,v,k}\E_1(v,(k-1)r+1,n)$.
\end{proof}

\begin{cor}\label{cor_order2} $\E_r(v,k,n)=O(n^2)$.
\end{cor}

\begin{proof} Follows from Corollary \ref{cor_order} and \cite[Theorem 1.4]{Klazar}.
\end{proof}

Now that we have settled the asymptotic relation between $\E_r(v,k,n)$ and $\E_1(v,n)$. What about the relation between $\E_r(v,k,n)$ and $\E_1(v,k,n)$. By Theorem \ref{thm_relation1}, $\E_r(v,k,n)\leq r\E_1(v,k,n)$.

\begin{prob} When is $\E_r(v,k,n)=r\E_1(v,k,n)$?
\end{prob}

We shall give a sufficient condition for which the equality holds.

\begin{thm}\label{thm_relation2} If $v$ is $2$-sparse and $\vert\vert v\vert\vert\geq 2$, then $\E_r(v,k,n)=r\E_1(v,k,n)$.
\end{thm}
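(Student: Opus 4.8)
The plan is to prove the reverse inequality $\E_r(v,k,n)\geq r\E_1(v,k,n)$, since Theorem~\ref{thm_relation1} already supplies $\E_r(v,k,n)\leq r\E_1(v,k,n)$. To this end I would start from an extremal sequence for $\E_1(v,k,n)$: choose $w=b_1b_2\dots b_m$ that is $v$-free and $k$-sparse with $\vert\vert w\vert\vert\leq n$ and $m=\E_1(v,k,n)$ (such a sequence exists because, by Corollary~\ref{cor_order2}, the extremal function is finite, so the maximum is attained over the finite collection of sequences of bounded length using at most $n$ letters). From $w$ I build the \emph{blow-up}
\[
u=b_1^r b_2^r\dots b_m^r,
\]
replacing each letter by $r$ consecutive copies. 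Then $\vert\vert u\vert\vert=\vert\vert w\vert\vert\leq n$ and $\vert u\vert=rm=r\E_1(v,k,n)$, so it suffices to check that $u$ is $(k,r)$-sparse and $v$-free; this would give $\E_r(v,k,n)\geq rm=r\E_1(v,k,n)$ and hence equality.

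For the $(k,r)$-sparsity, the key observation is that the $i$-th block of $u$ (positions $(i-1)r+1$ through $ir$) consists entirely of the letter $b_i$, so the position $j+tr$ lies in block $\lceil j/r\rceil+t$. Consequently, for a fixed $j$ the letters occurring at positions $j,j+r,\dots,j+(k-1)r$ are exactly the (at most $k$) consecutive letters $b_{i_0},b_{i_0+1},\dots,b_{i_0+k-1}$ of $w$, where $i_0=\lceil j/r\rceil$ (with fewer terms near the end of $u$). Since $w$ is $k$-sparse, any $k$ consecutive letters of $w$ are pairwise distinct, so this arithmetic-progression subsequence has no repeated letter. Hence $u$ is $(k,r)$-sparse.

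The crux --- and the step where the hypothesis that $v$ is $2$-sparse enters --- is showing $u$ is $v$-free. I would argue by contradiction: suppose $u$ contains a subsequence at positions $p_1<p_2<\dots<p_s$ isomorphic to $v=c_1c_2\dots c_s$. Writing $q_t=\lceil p_t/r\rceil$ for the block of $p_t$, the chosen subsequence reads $b_{q_1}b_{q_2}\dots b_{q_s}$ with $q_1\leq q_2\leq\dots\leq q_s$. If two consecutive indices coincided, say $q_t=q_{t+1}$, then $b_{q_t}=b_{q_{t+1}}$, which forces $c_t=c_{t+1}$ under the isomorphism; but $v$ being $2$-sparse means $c_t\neq c_{t+1}$, a contradiction. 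Hence $q_1<q_2<\dots<q_s$ is strictly increasing, so $b_{q_1}b_{q_2}\dots b_{q_s}$ is a genuine subsequence of $w$ isomorphic to $v$, contradicting that $w$ is $v$-free. I expect this to be the main obstacle, and it is precisely where $2$-sparsity is indispensable: without it a copy of $v$ could hide two equal consecutive letters inside a single block $b_i^r$, so the blow-up would in general fail to be $v$-free (for example $v=11$ is contained in every block of length $r\geq 2$). The remaining hypothesis $\vert\vert v\vert\vert\geq 2$ only serves to exclude the degenerate case in which $v$ is a single letter and both sides of the claimed identity vanish.
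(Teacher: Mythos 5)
Your proposal is correct and takes essentially the same approach as the paper: blow up an extremal $v$-free, $k$-sparse sequence $w=b_1\dots b_m$ into $u=b_1^r\dots b_m^r$, check $(k,r)$-sparsity, and use the $2$-sparsity of $v$ to force any copy of $v$ in $u$ to pick at most one letter per block, hence to descend to a copy of $v$ in $w$. You merely make explicit the block bookkeeping and the attainment of the maximum, which the paper leaves implicit.
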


\begin{proof}
Let $b_i\in [n]$, and $w=b_1b_2\dots b_m$ be a $v$-free and $k$-sparse sequence with $\vert\vert w\vert\vert\leq n$ and $m=\E_1(v,k,n)$. Let
\begin{equation}
u=b_1^{r}b_2^{r}\dots b_{m}^{r}.\notag
\end{equation}
Note that $u$ is $(k,r)$-sparse. If $u$ is not $v$-free, then it contains a subsequence $b_{i_1}^{s_{i_1}}b_{i_2}^{s_{i_2}}\dots b_{i_t}^{s_{i_t}}$ isomorphic to $v$. Since $v$ is 2-sparse, $s_{i_1}=s_{i_2}=\cdots =s_{i_t}=1$. Then $w$ contains a subsequence $b_{i_1}b_{i_2}\dots b_{i_t}$ isomorphic to $v$, a contradiction. Hence $u$ is $v$-free and $rm\leq \E_r(v,k,n)$. The theorem then follows from Theorem \ref{thm_relation1}.
\end{proof}

\begin{cor}\label{cor_realation2}
\begin{equation}
\E_r(abab,k,n)=\begin{cases}
rn, &\textnormal{if $1\leq n\leq k-1$}\\
 r(2n-k+1), &\textnormal{if $n\geq k$}.
\end{cases}\notag
\end{equation}
\end{cor}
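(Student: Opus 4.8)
The plan is to peel the problem down to the single-scale quantity $\E_1(abab,k,n)$ and then evaluate that quantity by an explicit construction together with a counting argument. First I would check the hypotheses of Theorem~\ref{thm_relation2}: the pattern $v=abab$ has its two $a$'s in positions $1,3$ and its two $b$'s in positions $2,4$, so $v$ is $2$-sparse, and $\vert\vert abab\vert\vert=2\ge 2$. Hence Theorem~\ref{thm_relation2} gives $\E_r(abab,k,n)=r\,\E_1(abab,k,n)$, and it remains to prove
\[
\E_1(abab,k,n)=\begin{cases} n, & 1\le n\le k-1,\\ 2n-k+1, & n\ge k.\end{cases}
\]
Multiplying by $r$ then yields the two displayed cases; I note in passing that the two formulae agree at $n=k-1$, so the apparent jump is harmless.

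For $1\le n\le k-1$ the argument is short: any $k$-sparse sequence $u$ with $\vert\vert u\vert\vert\le n\le k-1$ cannot contain two equal letters (they would have to be at distance $\ge k$, yet they would also lie within a window of at most $n\le k-1$ letters), so every letter is distinct and $\vert u\vert\le n$; the sequence $12\cdots n$ shows this is attained and is trivially $abab$-free. For the lower bound when $n\ge k$ I would exhibit the sequence obtained by concatenating the increasing run $1\,2\cdots n$ with the decreasing run $(n-k+1)(n-k)\cdots 2\,1$. Its length is $n+(n-k+1)=2n-k+1$; a repeated letter $i\le n-k+1$ occurs at positions $i$ and $2n-k+2-i$, whose distance is minimised (at $i=n-k+1$) to exactly $k$, so the sequence is $k$-sparse; and since the repeated letters are nested (any two of them read $a\,b\,b\,a$, never $a\,b\,a\,b$) it is $abab$-free. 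Thus $\E_1(abab,k,n)\ge 2n-k+1$.

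The core of the proof is the matching upper bound, which I would derive from the claim that $\vert u\vert\le 2m-k+1$ for every $abab$-free, $k$-sparse $u$ with exactly $m=\vert\vert u\vert\vert\ge k$ letters. The key lemma is that if a letter $c$ occurs at two consecutive positions $i<j$ (no $c$ in between), then no letter occurring strictly inside $(i,j)$ can occur after $j$: otherwise such a letter $b$ would give $c(i),b(p),c(j),b(q)$ with $i<p<j<q$, an occurrence of $abab$. Using $k$-sparsity ($j-i\ge k$, so $j-1>i$), this shows that for every non-first occurrence at position $j$ the letter $a_{j-1}$ has its last occurrence exactly at $j-1$; the assignment $j\mapsto a_{j-1}$ is therefore an injection from the non-first occurrences into the letters, which already recovers the classical bound $\vert u\vert\le 2m-1$.

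The hard part is squeezing out the extra $k-1$. I would do this by examining the first repeated occurrence, say at position $j_0$ with previous occurrence at $i_0$, so $j_0-i_0\ge k$ and all of $a_1,\dots,a_{j_0-1}$ are distinct first occurrences. By the key lemma the $\ge k-1$ letters $a_{i_0+1},\dots,a_{j_0-1}$ sitting inside the gap never reappear, hence each is a singleton dying at its own position $t$; for $t\le j_0-2$ the next symbol $a_{t+1}$ is again a first occurrence, so these $\ge k-2$ letters are not hit by the injection $j\mapsto a_{j-1}$ (whose image consists of letters whose last occurrence is immediately followed by a repeat). Together with the last letter $a_{\vert u\vert}$ (never followed by anything) this exhibits at least $k-1$ letters outside the image, giving $\vert u\vert-m\le m-(k-1)$, i.e.\ $\vert u\vert\le 2m-k+1\le 2n-k+1$; the cases with no repeat, or with $m<k$, are immediate. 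I expect the only delicate point to be this final counting step—verifying that the $k-2$ gap singletons really are distinct from each other and from $a_{\vert u\vert}$, which holds because they die at the distinct positions $i_0+1,\dots,j_0-2$, all strictly before $j_0\le\vert u\vert$.
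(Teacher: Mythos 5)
Your proposal is correct, and its first step coincides exactly with the paper's entire proof: check that $abab$ is $2$-sparse with $\vert\vert abab\vert\vert=2$, apply Theorem~\ref{thm_relation2} to get $\E_r(abab,k,n)=r\,\E_1(abab,k,n)$, and then use the known value of $\E_1(abab,k,n)$. The only difference is that the paper obtains that value by citing Klazar's Theorem~2.2, whereas you reprove it from scratch; your self-contained argument --- the nested construction $12\cdots n\,(n-k+1)\cdots 21$ for the lower bound, and the injection from non-first occurrences into letters, sharpened by the $\geq k-2$ singletons trapped strictly inside the first repetition gap together with the final letter $a_{\vert u\vert}$ --- checks out, including the delicate point that those excluded letters are pairwise distinct because their unique occurrences sit at distinct positions before $j_0\leq\vert u\vert$. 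So the two proofs agree where the paper actually does work, and your extra material is a sound substitute for the external citation (at the cost of length, and without the generality of Klazar's theorem, which the paper reuses elsewhere, e.g.\ in the proof of Theorem~\ref{thm_exact_value} via equation~(\ref{eq2})).
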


\begin{proof} Follows from Theorem \ref{thm_relation2} and \cite[Theorem 2.2]{Klazar}.
\end{proof}

A sequence $v$ over $[n]$ is called a \emph{chain} if $\sigma_{i}(v)\leq 1$ for all $i\in [n]$. Note that if $v$ is a chain, then $\E_1(v,k,n)=\min(\vert v\vert-1,n)$. Since a chain is 2-sparse, the following corollary follows from Theorem \ref{thm_relation2}.

\begin{cor}\label{cor_realation3} If $v$ is a chain, then $\E_r(v,k,n)=r\min(\vert v\vert-1,n)$.
\end{cor}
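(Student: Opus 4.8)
The plan is to reduce the corollary to Theorem \ref{thm_relation2} via two observations, of which only the formula for $\E_1$ carries genuine content. First I would record that a chain is $2$-sparse: since $\sigma_i(v)\leq 1$ for every $i$, no letter of $v$ is repeated at all, so the condition ``$a_i=a_j$ with $i>j$ implies $i-j\geq 2$'' holds vacuously. In particular $\vert v\vert=\vert\vert v\vert\vert$, so the claimed value $r\min(\vert v\vert-1,n)$ coincides with $r\min(\vert\vert v\vert\vert-1,n)$, which is the form Theorem \ref{thm_relation2} will produce.

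Next I would justify the key identity $\E_1(v,k,n)=\min(\vert v\vert-1,n)$, which is stated as a remark just before the corollary. Writing $p=\vert v\vert=\vert\vert v\vert\vert$, the chain $v$ is isomorphic to the normal sequence $12\cdots p$, so a sequence $u$ contains $v$ exactly when it admits a subsequence of $p$ pairwise distinct letters, i.e. exactly when $\vert\vert u\vert\vert\geq p$. Hence a $v$-free sequence $u$ with $\vert\vert u\vert\vert\leq n$ satisfies $\vert\vert u\vert\vert\leq s$, where $s=\min(p-1,n)$. Because we assume $k\geq\vert\vert v\vert\vert=p$, we get $s<k$; and any $k$-sparse sequence using fewer than $k$ distinct letters must in fact have all of its letters distinct, since if its length were at least $k$ the first $k$ positions would force $k$ distinct letters by $k$-sparseness. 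Its length is therefore at most $s$, and this bound is attained by $12\cdots s$, which is $k$-sparse, $v$-free (it uses only $s<p$ distinct letters), and satisfies $\vert\vert 12\cdots s\vert\vert=s\leq n$. This gives $\E_1(v,k,n)=s$.

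Finally, assuming $\vert\vert v\vert\vert\geq 2$, I would invoke Theorem \ref{thm_relation2} --- legitimate because $v$ is $2$-sparse --- to conclude $\E_r(v,k,n)=r\E_1(v,k,n)=r\min(\vert v\vert-1,n)$, as desired. The passage from $r=1$ to general $r$ is thus entirely automatic once the $\E_1$ count is in place.

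The only real obstacle is the degenerate case $\vert\vert v\vert\vert=1$, which lies outside the hypothesis of Theorem \ref{thm_relation2}. Here $v$ is a single letter, so $\vert v\vert=1$, every nonempty sequence contains $v$, and the unique $v$-free sequence is the empty one; hence $\E_r(v,k,n)=0=r\min(\vert v\vert-1,n)$ and the formula persists. I expect nothing subtle beyond this edge case: the substantive work is the counting identity for $\E_1$, not the scaling by $r$.
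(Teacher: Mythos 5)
Your proposal is correct and follows exactly the paper's route: note that a chain is $2$-sparse, establish $\E_1(v,k,n)=\min(\vert v\vert-1,n)$ (which the paper merely asserts in the remark preceding the corollary), and apply Theorem \ref{thm_relation2}. You also supply the $\vert\vert v\vert\vert=1$ edge case that the paper's appeal to Theorem \ref{thm_relation2} (which assumes $\vert\vert v\vert\vert\geq 2$) silently omits, which is a welcome bit of extra care.
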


In general, $\E_r(v,k,n)\neq r\E_1(v,k,n)$. For instance, $\E_1(a^2,k,n)=n=\E_r(a^2,k,n)$, as the following theorem shows.

\begin{thm}\label{thm_pre_separable} If $n\geq rk$, then $\E_r(a^s,k,n)=(s-1)n$.
\end{thm}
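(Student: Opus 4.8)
The plan is to establish the two matching bounds $\E_r(a^s,k,n)\le (s-1)n$ and $\E_r(a^s,k,n)\ge (s-1)n$ separately. The upper bound is forced by the pattern-avoidance and distinct-letter constraints alone, while the lower bound is witnessed by a single explicit sequence whose $(k,r)$-sparsity is exactly where the hypothesis $n\ge rk$ is consumed.

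First I would record what $a^s$-freeness means. Since $a^s$ is one letter repeated $s$ times, a sequence $u$ contains a subsequence isomorphic to $a^s$ precisely when some letter occurs at least $s$ times; hence $u$ is $a^s$-free if and only if $\sigma_i(u)\le s-1$ for every $i$. If in addition $\vert\vert u\vert\vert\le n$, then
\begin{equation}
\vert u\vert=\sum_{i\in L(u)}\sigma_i(u)\le (s-1)\vert\vert u\vert\vert\le (s-1)n,\notag
\end{equation}
giving $\E_r(a^s,k,n)\le (s-1)n$ with no use of sparsity or of the hypothesis $n\ge rk$.

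For the lower bound I would exhibit one witness. Take $u=(1\,2\,\cdots\,n)^{s-1}=a_1a_2\cdots a_{(s-1)n}$, the concatenation of $s-1$ copies of the block $1\,2\,\cdots\,n$, so that $a_p=a_q$ if and only if $p\equiv q\pmod n$. Every letter occurs exactly $s-1$ times, so $u$ is $a^s$-free, $\vert\vert u\vert\vert=n\le n$, and $\vert u\vert=(s-1)n$. It then remains only to check that $u$ is $(k,r)$-sparse, after which $(s-1)n\le \E_r(a^s,k,n)$ follows and the two bounds combine to give the claimed equality.

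The one genuine verification, and the step where $n\ge rk$ enters, is the $(k,r)$-sparsity of $u$. Fix $j$ and consider the window positions $j,j+r,\dots,j+(k-1)r$. Two of them, say $j+ir$ and $j+i'r$ with $0\le i<i'\le k-1$, carry the same letter if and only if their difference $(i'-i)r$ is a multiple of $n$. But $0<(i'-i)r\le (k-1)r<rk\le n$, so $(i'-i)r$ is strictly between $0$ and $n$ and cannot be a multiple of $n$; thus the window has no repeated letter. As $j$ was arbitrary, $u$ is $(k,r)$-sparse, completing the lower bound. The only obstacle worth flagging is ensuring the window span $(k-1)r$ stays strictly below $n$, which is exactly what $n\ge rk$ guarantees.
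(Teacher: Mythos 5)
Your proposal is correct and follows essentially the same route as the paper: the upper bound via $\sigma_i(u)\le s-1$ and the lower bound via the witness $(12\cdots n)^{s-1}$. The only difference is that you spell out the $(k,r)$-sparsity check for the witness (the step where $n\ge rk$ is used), which the paper asserts without detail.
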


\begin{proof} Let $u$ be a $a^s$-free and $(k,r)$-sparse sequence. Note that $\sigma_i(u)\leq s-1$ for all $i\in [n]$. Therefore $\E_r(a^s,k,n)\leq (s-1)n$. Let $v=12\dots n$. Then the sequence
\begin{equation}
v^{s-1}=\overbrace{v\dots v}^{\textnormal{$s-1$ times}},\notag
\end{equation}
is $a^s$-free and $(k,r)$-sparse. Furthermore $\vert v^{s-1}\vert=(s-1)n$. Hence $\E_r(a^s,k,n)\geq (s-1)n$ and the theorem follows.
\end{proof}

\section{Minimum value of $\E_r(v,k,n)$}

Let $a_i\in [n]$, and $u=a_1^{s_1}a_2^{s_2}\dots a_m^{s_m}$ be a sequence. A sequence $v$ is called a \emph{blow-up} of $u$ if $v=a_1^{s_1'}a_2^{s_2'}\dots a_m^{s_m'}$ and $s_i'\geq s_i$ for all $i$. Now, let us look at the minimum value of $\E_r(v,k,n)$. If $v$ is not a blow-up of a chain, then the sequence $1^r2^r\dots n^r$ is  $v$-free and $(k,r)$-sparse. Hence, $\E_r(v,k,n)\geq rn$.

\begin{prob} Suppose $v$ is not a blow-up of a chain. When is $\E_r(v,k,n)=rn$?
\end{prob}

\begin{lm}\label{lm_pre_thm1} If $n<k$ and $v$ is not a blow-up of a chain, then $\E_r(v,k,n)=rn$.
\end{lm}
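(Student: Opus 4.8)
The plan is to establish the matching upper bound $\E_r(v,k,n)\le rn$, since the lower bound $\E_r(v,k,n)\ge rn$ is already supplied (just before the statement) by the sequence $1^r2^r\dots n^r$, which is $(k,r)$-sparse and, because $v$ is not a blow-up of a chain, $v$-free. I note at the outset that the upper bound I am aiming for will not actually use $v$-freeness or the chain hypothesis: it holds for \emph{every} $v$ and rests only on $(k,r)$-sparsity together with the assumption $n<k$. Thus the hypothesis ``$v$ is not a blow-up of a chain'' is needed only to guarantee the lower bound, and the whole lemma reduces to proving the universal upper bound.

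The key idea is to split a $(k,r)$-sparse sequence into $r$ ordinary $k$-sparse sequences according to the residue of position modulo $r$. So let $u=a_1a_2\dots a_l$ be $v$-free and $(k,r)$-sparse with $\vert\vert u\vert\vert\le n$, and for each residue class $c$ modulo $r$ let $w_c$ be the subsequence of $u$ formed by the entries in positions $p\equiv c\pmod r$, listed in increasing order of position. First I would translate the defining condition: unwinding the definition of $(k,r)$-sparse, if $a_p=a_q$ with $p<q$ and $p\equiv q\pmod r$, then $p$ and $q$ both lie in the arithmetic progression $p,p+r,\dots,p+(k-1)r$ whenever $q-p<kr$, which is forbidden; hence $q-p\ge kr$. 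Read inside a single class $w_c$, this says that two equal entries of $w_c$ must be at least $k$ apart \emph{within} $w_c$, i.e.\ each $w_c$ is $k$-sparse in the ordinary sense.

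Next I would bound each class. Each $w_c$ is a subsequence of $u$, so it uses at most $\vert\vert u\vert\vert\le n$ distinct letters, and by the characterization of $k$-sparseness any $k$ consecutive entries of $w_c$ are pairwise distinct. If $\vert w_c\vert\ge n+1$, then since $n<k$ gives $n+1\le k$, the first $n+1$ entries of $w_c$ would be pairwise distinct, forcing $n+1$ distinct letters from at most $n$ available ones --- a contradiction. Hence $\vert w_c\vert\le n$ for every $c$, and summing over the $r$ classes gives $\vert u\vert=\sum_{c=1}^{r}\vert w_c\vert\le rn$, so $\E_r(v,k,n)\le rn$; combined with the lower bound this yields equality. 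The argument has no genuinely hard step; the only points that need care are the faithful translation of $(k,r)$-sparsity into per-class $k$-sparsity (checking that the progression $p,p+r,\dots,p+(k-1)r$ really captures any two equal letters lying fewer than $kr$ apart in the same residue class) and the use of the \emph{strict} inequality $n<k$ in the form $n+1\le k$, which is exactly what upgrades the pigeonhole bound from $\vert w_c\vert\le k-1$ to the sharp $\vert w_c\vert\le n$.
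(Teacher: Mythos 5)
Your proof is correct and rests on the same idea as the paper's: the lower bound comes from $1^r2^r\dots n^r$, and the upper bound uses only $(k,r)$-sparsity plus $n<k$ to force $n+1$ distinct letters along an arithmetic progression of step $r$, a contradiction by pigeonhole. The paper argues slightly more directly (if $\vert u\vert\ge rn+1$ it exhibits the single progression $a_1a_{1+r}\dots a_{1+nr}$, whose $n+1\le k$ entries must be distinct), whereas you partition all positions into $r$ residue classes and bound each by $n$, but this is the same argument in a mildly different packaging.
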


\begin{proof} Suppose $\E_r(v,k,n)>rn$. Let $a_i\in [n]$, and $u=a_1a_2\dots a_l$ be a $v$-free and $(k,r)$-sparse sequence with $l=\E_r(v,k,n)$. Since $l\geq rn+1$, $u$ contains the subsequence $w=a_1a_{1+r}\dots a_{1+{n}r}$. Note that all the letters appearing in $w$ must be distinct, for $u$ is $(k,r)$-sparse. However this is impossible as we only have $n$ letters. Hence $\E_r(v,k,n)=rn$.
\end{proof}

\begin{lm}\label{lm_pre_thm2}  Suppose $n\geq k$ and $v$ is not a blow-up of a chain. Then $\E_1(v,k,n)=n$ if and only if $v=awa$, where $w$ is a chain that does not contain the letter $a$.
\end{lm}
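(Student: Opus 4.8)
The plan is to split the biconditional into an upper bound and a lower bound and to handle the two directions with different tools. Since $v$ is not a blow-up of a chain, the observation preceding Lemma~\ref{lm_pre_thm1} (applied with $r=1$, using that the chain $12\cdots n$ is $v$-free and $k$-sparse) already gives $\E_1(v,k,n)\ge n$. So proving $\E_1(v,k,n)=n$ is equivalent to proving $\E_1(v,k,n)\le n$, i.e.\ that every $v$-free $k$-sparse sequence $u$ with $\vert\vert u\vert\vert\le n$ has $\vert u\vert\le n$. I would also record the clean reformulation of the target condition: $v=awa$ with $w$ a chain avoiding $a$ holds exactly when $v$ has a unique repeated letter, that letter occurs exactly twice, and those two occurrences are the first and last positions of $v$.

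For the ``if'' direction, assume $v=awa$ with $\vert w\vert=t$, so that $\vert\vert v\vert\vert=t+1\le k$. I claim any $k$-sparse $u$ with $\vert u\vert>\vert\vert u\vert\vert$ contains $v$. Indeed $\vert u\vert>\vert\vert u\vert\vert$ forces some letter $x$ to repeat; choose two consecutive occurrences of $x$ at positions $p<q$ with no $x$ strictly between them. By $k$-sparsity $q-p\ge k\ge t+1$, so the $t$ positions $p+1,\dots,p+t$ all lie strictly between $p$ and $q$; their letters are pairwise distinct (a consecutive block of length $t\le k$) and none equals $x$. Hence $a_p\,a_{p+1}\cdots a_{p+t}\,a_q$ is isomorphic to $awa=v$, contradicting $v$-freeness. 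Therefore $\vert u\vert\le\vert\vert u\vert\vert\le n$, giving $\E_1(v,k,n)\le n$ and thus equality.

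For the ``only if'' direction I would argue the contrapositive via one explicit construction. Take $u=1\,2\cdots n\,1$, which is $k$-sparse because its two $1$'s are at distance $n\ge k$, uses exactly $n$ letters, and has length $n+1$. The key structural fact is that $1$ is the only letter occurring twice in $u$, so every subsequence of $u$ that repeats a letter must use both $1$'s, which are the first and last entries of $u$; such a subsequence therefore has the shape $1\,j_1\cdots j_m\,1$ with $2\le j_1<\cdots<j_m\le n$, i.e.\ is isomorphic to $awa$ with $w$ a chain avoiding $a$. Consequently, if $v$ is not of this form (and, not being a blow-up of a chain, is in particular not a chain), then $u$ contains no subsequence isomorphic to $v$, so $u$ is $v$-free and $\E_1(v,k,n)\ge n+1>n$.

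The delicate point, and the step I expect to need the most care, is matching the construction to the target condition in both directions. I would verify that no non-chain subsequence of $12\cdots n\,1$ escapes the form $awa$ (this rests on $1$ being the unique doubled letter and on the remaining entries being strictly increasing, hence distinct), so that the construction realizes \emph{exactly} the $awa$-type patterns; and in the ``if'' direction I would check that the block $a_{p+1}\cdots a_{p+t}$ genuinely avoids $x$ and is internally distinct, which is precisely where the standing assumption $k\ge\vert\vert v\vert\vert$ is consumed.
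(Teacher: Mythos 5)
Your proof is correct and follows essentially the same route as the paper: the ``only if'' direction uses the same witness sequence $12\cdots n\,1$ and the observation that its only repeated-letter subsequences have the form $awa$ with $w$ a chain, and the ``if'' direction uses the same $k$-sparsity argument that two occurrences of a letter are at distance at least $k\ge\vert\vert v\vert\vert$, so the intervening consecutive block supplies the distinct middle letters of $v$. The only differences are cosmetic (you take exactly $t=\vert w\vert$ middle letters where the paper takes $k-1$, and you phrase the forward direction as a contrapositive).
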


\begin{proof} Suppose $\E_1(v,k,n)=n$. Note that the sequence $12\dots n1$ is $k$-sparse. Since $\vert 12\dots n1\vert=n+1$, it is not $v$-free. This implies that $v=1w1$, where $w$ is a chain that does not contain the letter $1$.

Let $u$ be a $v$-free and $k$-sparse sequence. If a letter, say $1$ appears more than once in $u$, then $u$ contains a subsequence $1b_1b_2\dots b_{k-1}1$ where all $b_i$ are distinct letters. Since $\vert\vert w\vert\vert\leq k-1$, $1b_1b_2\dots b_{k-1}1$ has an isomorphic copy of $v$, a contradiction. Therefore every element appears at most once, and $\vert u\vert\leq n$. Hence $\E_1(v,k,n)=n$.
\end{proof}

\begin{thm}\label{thm_part2} Suppose $n\geq k$ and $v$ is not a blow-up of a chain. If $\E_r(v,k,n)=rn$, then
 $v=awa$, where $w$ is a blow-up of a chain that does not contain the letter $a$.
\end{thm}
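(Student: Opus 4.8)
The plan is to adapt the proof of Lemma~\ref{lm_pre_thm2} to the $(k,r)$-sparse setting, the new feature being that we must produce an explicit $(k,r)$-sparse witness of length $rn+1$ and then read off the shape of $v$ from the (very rigid) subsequences of that witness. Since $\E_r(v,k,n)=rn$, any $(k,r)$-sparse sequence with at most $n$ distinct letters and length exceeding $rn$ must fail to be $v$-free. First I would exhibit
\[
u^*=1^r2^r\cdots n^r1,
\]
of length $rn+1$ with $\vert\vert u^*\vert\vert=n$, and check it is $(k,r)$-sparse. This is where the hypothesis $n\ge k$ enters: two equal letters of $u^*$ lie in a common residue class modulo $r$ only for the pair of $1$'s at positions $1$ and $rn+1$ (every other letter occupies a single block of $r$ consecutive positions, whose residues modulo $r$ are pairwise distinct), and these two $1$'s are at distance $rn>(k-1)r$, so no window $a_ja_{j+r}\cdots a_{j+(k-1)r}$ repeats a letter. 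Hence $u^*$ is $(k,r)$-sparse, and by extremality $u^*\supset v$.

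Next I would analyze which sequences embed into $u^*$. The key structural observation is that every letter other than $1$ occupies one block of consecutive positions, so in any subsequence it forms a single run; only the letter $1$ occurs non-contiguously, and even then its occurrences in any subsequence form at most two runs (the front block $1^r$ together with the single trailing $1$ at position $rn+1$). Since $v$ is not a blow-up of a chain, $v$ has a letter appearing in at least two runs; as the letter-isomorphism witnessing $u^*\supset v$ is a bijection, this letter must correspond to $1$, it is the unique such letter, and every other letter of $v$ is a single run. Reading off the image, $v$ is isomorphic to $1^az\,1^b$ where $z$ is a nonempty blow-up of a chain not containing $1$ and $a,b\ge1$; moreover the trailing $1$'s of $v$ must embed into $1$'s of $u^*$ occurring after some letter $\ge2$, of which there is exactly one, forcing $b=1$.

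It then remains to force $a=1$, which I would obtain by symmetry. Reversing a sequence preserves $(k,r)$-sparsity (reversal preserves both the gap and the residue class modulo $r$ of any pair of equal positions) and sends $v$-free sequences to $v^R$-free ones, so $\E_r(v^R,k,n)=\E_r(v,k,n)=rn$; alternatively one may run the same argument against the mirror witness $1\,2^r\cdots n^r1^r$. Applying the previous paragraph to $v^R$, which is isomorphic to $1\,z^R1^a$ and whose trailing run of $1$'s has length $a$, yields $a\le1$, hence $a=1$. Therefore $v$ is isomorphic to $1\,z\,1$, that is, $v=awa$ with $a$ the repeated letter and $w=z$ a blow-up of a chain not containing $a$, as required.

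I expect the main obstacle to be the rigidity analysis of the second paragraph: one must argue carefully that no letter of $v$ can be forced into three or more runs and, above all, that the ``wrap-around'' letter contributes only a single occurrence on the trailing side. This one-sided scarcity is exactly what pins the end-multiplicities down to $1$, and it is the reason an \emph{asymmetric} witness (a single trailing $1$, not a full block $1^r$) together with the reversal symmetry is needed, rather than the symmetric sequence $1^r2^r\cdots n^r1^r$, which would only yield $v\cong1^az\,1^b$ without bounding $a$ or $b$.
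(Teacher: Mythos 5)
Your proposal is correct and follows essentially the same route as the paper: the paper likewise tests the two $(k,r)$-sparse witnesses $1^r2^r\cdots n^r1$ and $n1^r2^r\cdots n^r$ (the latter being your mirror witness up to relabeling, and equivalent to your reversal argument) and reads off $v\cong 1^swa$ with single end-multiplicities from the rigidity of these sequences. Your write-up actually spells out the rigidity analysis (why the non-contiguous letter of $v$ must map to $1$ and why the end runs have length one) in more detail than the paper does.
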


\begin{proof} By Lemma \ref{lm_pre_thm2}, we may assume that $r\geq 2$. Now the sequence $1^r2^r\dots n^r1$ is $(k,r)$-sparse. Since $\vert 1^r2^r\dots n^r1\vert=rn+1$, it is not $v$-free. This implies that $v$ is isomorphic to $1^sw1$, where $w$ is a subsequence of $2^r\dots n^r$ and $1\leq s\leq r$. Similarly from $\vert n1^r2^r\dots n^r\vert=rn+1$, we deduce that $v$ is isomorphic to $nw'n^{s'}$, where $w'$ is a subsequence of $1^r2^r\dots (n-1)^r$ and $1\leq s'\leq r$. Hence $v=awa$, where $w$ is a blow-up of a chain that does not contain the letter $a$.
\end{proof}

In general, the converse of Theorem \ref{thm_part2} does not hold. For instance, when $v=abba$, $a\neq b$, $r=2$ and $n\geq 2k$, the sequence $w= 12\dots (n-1)n12\dots (n-1)n^2$ is $v$-free and $(k,2)$-sparse, and $\vert w\vert=2n+1>2n=rn$.

Klazar \cite[Theorem 3.1]{Klazar} showed that
\begin{equation}\label{eq2}
\E_1(abba,k,n)=\begin{cases}
n, &\textnormal{if $1\leq n\leq k-1$}\\
2n+\left\lfloor \frac{n-1}{k-1}\right\rfloor-1, &\textnormal{if $n\geq k$}.
\end{cases}
\end{equation}
It would be interesting to find the exact value of $\E_r(abba,k,n)$ for $r\geq 2$.

\begin{lm}\label{lm_pre_exact_value} Let $u$ be a $(k,r)$-sparse sequence over $[n]$ with $k\geq 2$ and $\vert u\vert=2r$. Then $\sigma_i(u)\leq r$ for all $i\in [n]$.
\end{lm}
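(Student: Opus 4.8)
The plan is to extract from $(k,r)$-sparsity only the weakest consequence I actually need, namely that two positions separated by exactly $r$ carry distinct letters, and then to exploit a clean pairing of the $2r$ positions of $u$. The whole argument is combinatorial and short; the real content is spotting the right partition.

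First I would record the only structural fact used. Since $k\geq 2$, the defining condition of $(k,r)$-sparsity, applied at any admissible $j$, says in particular that the two-term subsequence $a_j a_{j+r}$ has no repeated letter; that is, $a_j\neq a_{j+r}$ whenever $j+r\leq |u|=2r$, i.e. for every $j$ with $1\leq j\leq r$. (Equivalently, a $(k,r)$-sparse sequence with $k\geq 2$ is in particular $(2,r)$-sparse.)

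Next I would observe that the $r$ index pairs $\{j,\,j+r\}$ for $j=1,2,\dots,r$ partition the position set $\{1,2,\dots,2r\}$: the first coordinates run through $1,\dots,r$ and the second through $r+1,\dots,2r$, so the pairs are pairwise disjoint and their union is all of $\{1,\dots,2r\}$. By the previous paragraph each such pair $\{j,\,j+r\}$ holds two distinct letters, so a fixed letter $i\in[n]$ can occupy at most one of the two positions in any given pair.

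Finally I would sum this local bound over the $r$ pairs. Since $\sigma_i(u)$ counts the positions at which $i$ occurs, and at most one occurrence can land in each of the $r$ disjoint pairs, we get $\sigma_i(u)\leq r$; as this holds for every $i\in[n]$, the lemma follows. I do not expect a genuine obstacle here: the only point deserving a moment's care is checking that the pairs $\{j,\,j+r\}$ genuinely tile $\{1,\dots,2r\}$ with no overlap and no gaps, which is immediate from the hypothesis $|u|=2r$ being exactly twice the shift $r$.
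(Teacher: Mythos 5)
Your proof is correct and is essentially the same argument as the paper's: the paper also notes that $k\geq 2$ forces $a_j\neq a_{j+r}$ for $1\leq j\leq r$ and then invokes the pigeonhole principle, which is exactly your pairing of $\{1,\dots,2r\}$ into the $r$ disjoint pairs $\{j,j+r\}$. You have merely spelled out the pigeonhole step in more detail.
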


\begin{proof} Let $u=a_1a_2\dots a_{2r}$ where  $a_i\in [n]$. Since the sequence is $(k,r)$-sparse and $k\geq 2$, $a_j$ and $a_{j+r}$ are distinct letters for $1\leq j\leq r$. By the pigeonhole principle, $\sigma_i(u)\leq r$ for all $i$.
\end{proof}

\begin{thm}\label{thm_exact_value} If $r\geq 5$, then $\E_r(abba,k,n)=rn$. Furthermore, the longest sequence realizing this length is $1^r2^r\dots n^r$.
\end{thm}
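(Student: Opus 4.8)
The lower bound $\E_r(abba,k,n)\ge rn$ is already in hand, since $1^r2^r\cdots n^r$ is $abba$-free and $(k,r)$-sparse, so the whole content is the upper bound together with uniqueness of the extremiser. The first thing I would record is a purely combinatorial reformulation of $abba$-freeness: a sequence $u$ is $abba$-free if and only if, for every letter $x$, each letter $y\ne x$ occurs at most once strictly between the first and last occurrences of $x$. From this I would extract the key structural device, a \emph{barrier lemma}: whenever a letter $y$ occupies two consecutive positions $a,a+1$ (a run of length $\ge2$), no letter other than $y$ can appear both before the run and after the run, for otherwise we read off an explicit $abba$. Thus every run of length $\ge2$ forces all previously active letters except its own to be finished. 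I would also note the two elementary consequences of $(k,r)$-sparseness with $k\ge2=\vert\vert abba\vert\vert$: every maximal run has length at most $r$ (positions $a$ and $a+r$ would otherwise repeat a letter at distance $r$, quotient $1<k$), and two occurrences of one letter in a common residue class modulo $r$ are at distance $\ge kr$.

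The plan for the upper bound is to cut $u$ at its runs of length $\ge2$. Writing $u=G_0\rho_1G_1\rho_2\cdots\rho_LG_L$, where the $\rho_j$ are the maximal runs of length $\ge2$ (each of length $\le r$) and each $G_j$ is the maximal intervening block in which every run has length $1$, each $G_j$ is again $abba$-free and $(k,r)$-sparse, being a consecutive subsequence. Since $|u|=\sum_j\ell_j+\sum_j|G_j|$, it suffices to distribute a budget of $rn$ (think of $r$ tokens per letter) across these pieces. The barrier lemma is what makes this possible: no letter survives across a $\rho_j$ except the letter of $\rho_j$ itself, so letters are almost confined to single pieces, and I would try to assign to each piece a set of owned letters whose sizes sum to $n$. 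The intended inequality is $|G_j|\le r\,m_j$, with $m_j$ the number of letters owned by $G_j$, together with a matching accounting of the $\ell_j$. The bookkeeping must be global rather than letter-by-letter, because a single letter can own several long runs (for example $y^rz_1\cdots z_ry^r$ is $abba$-free and $(k,r)$-sparse when $k=2$), and such persistent letters straddle whole regions.

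The main obstacle is exactly the all-singleton blocks $G_j$. Here $abba$-freeness alone is too weak: sequences such as $1a_11a_2\cdots a_r1$ contain no long run yet let one letter appear $r+1$ times, and more elaborate singleton patterns let a letter badly overspend its budget of $r$. The point to prove is that any such overspending is paid for by a surplus of letters forced to be near-singletons, and that for $r\ge5$ this trade-off never yields a net gain; this is precisely where the hypothesis $r\ge5$ enters, since the text's $r=2$ example $12\cdots n\,12\cdots(n-1)\,n^2$ shows the bound $rn$ genuinely fails for small $r$. Concretely I expect to control each $G_j$ by combining the $abba$-free spacing (between consecutive occurrences of a repeated letter the intervening letters are distinct) with the $(k,r)$-sparse spacing (same-residue repeats are $\ge kr$ apart, so a repeated letter must straddle many positions and, through the enclosure/barrier structure, lock up many distinct letters), and thereby to extract the clean inequality $|G_j|\le r\,m_j$. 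Finally, I would obtain uniqueness by tracking the equality case throughout: equality forces every letter to occur exactly $r$ times, every run to have length exactly $r$, and the barrier lemma then forces the runs to be pairwise letter-disjoint and ordered by first occurrence, which after normalising is $1^r2^r\cdots n^r$.
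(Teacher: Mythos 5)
Your lower bound, your barrier lemma (a run $c^2$ splits the letter sets, $L(\text{prefix})\cap L(\text{suffix})\subseteq\{c\}$), and your plan to cut $u$ at the maximal runs of length at least $2$ all match the skeleton of the paper's argument, which is organized instead as an induction on $n$ with a case analysis on the initial run of the first letter. But the proposal has a genuine gap at exactly the point you flag yourself: you never prove the inequality $\vert G_j\vert\le r\,m_j$ for the all-singleton blocks, nor any substitute for it; you only state that you ``expect'' to extract it and that overspending ``is paid for'' by a surplus of near-singleton letters. That step is the entire analytic content of the theorem, and it is where the hypothesis $r\ge 5$ must enter. The paper closes it by importing Klazar's exact formula $\E_1(abba,2,m)=3m-2$ (equation (\ref{eq2})): a block with no immediate repeats is $2$-sparse, so it spends at most $3$ per letter against a budget of $r\ge 5$, and the surplus of $r-3$ per letter absorbs both the cost of the bounding runs and the $+r$ overhead of an initial run $1^r$ --- the latter only because the $r$ positions following $1^r$ are forced to carry $r$ distinct letters, which is why the final inequality reads $rn-(r-3)t_1+r\le rn-r(r-4)<rn$ and needs $r\ge5$ rather than $r\ge4$. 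Without the bound $3m-2$ (or some equivalent linear bound on $2$-sparse $abba$-free sequences, which is itself a theorem) your accounting cannot start.

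The uniqueness claim is likewise asserted rather than derived: ``equality forces every letter to occur exactly $r$ times'' is the conclusion, not an available hypothesis, and your global decomposition makes it harder to track because, as you yourself note, one letter may own several runs of length $\ge 2$ (e.g.\ $y^rz_1\cdots z_ry^r$), so the runs in an extremal sequence need not a priori be letter-disjoint. The paper's induction handles this automatically: in the only branch where equality survives ($u=1^rw$ with $1\notin L(w)$), the inductive hypothesis already pins $w$ down as $2^r\cdots n^r$. I would recommend either reorganizing your argument as that induction, or, if you keep the global decomposition, stating and proving the block inequality precisely, including the bookkeeping for letters shared between a block and the runs that bound it.
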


\begin{proof} First, we assume $k=2$. For $n=1$, the  $abba$-free and $(2,r)$-sparse normal sequences with exactly one letter are
\begin{itemize}
\item[(i)] $1^j, \ \ 1\leq j\leq r$.
\end{itemize}
For $n=2$, the  $abba$-free and $(2,r)$-sparse normal sequences  with exactly two letters are
\begin{itemize}
\item[(i)] $1^i2^j, \ \ 1\leq i,j\leq r$;
\item[(ii)] $1^i21^j, \ \ 1\leq i,j\leq r$, $i+j+1\leq r$;
\item[(iii)] $1^i212^j, \ \ 1\leq i,j\leq r$, $i+j+2\leq 2r-2$.
\end{itemize}
Thus the theorem holds for $n=1,2$. Suppose $n\geq 3$. Assume that the theorem holds for all $n'$ with $1\leq n'<n$.

Let $u$ be a $abba$-free and $(2,r)$-sparse sequence with $\vert u\vert =\E_r(abba,2,n)$. We may write
\begin{equation}
u=a_1a_2\dots a_ra_{r+1}a_{r+2}\dots a_{2r}w,\notag
\end{equation}
where $a_i\in [n]$ and $w$ is a sequence. We may assume that $1=a_1=a_2=\cdots=a_s$, $a_{s+1}\neq 1$ and $1\leq s\leq r$.

\noindent
{\bf Case 1.} Suppose $s<r$. Consider the sequence $1u$. Note that $1u$ is $abba$-free. By the choice of $u$, $1u$ is not $(2,r)$-sparse. This implies that $a_r=1$. Therefore $u$ must be of the form
\begin{equation}
1^sa_{s+1}\dots a_{r-1}1w_1,\notag
\end{equation}
where $w_1=a_{r+1}a_{r+2}\dots a_{2r}w$ and $a_{r+1}\neq 1$ for $a_1=1$.

Suppose $1\in L(w_1)$. Then $w_1=v1v'$ for some sequences $v,v'$ and $\vert v\vert\geq 1$. If $a_{s+1}\in L(v)$, then $u$ contains the subsequence $1a_{s+1}^21$, a contradiction. If $a_{s+1}\in L(v')$, then $u$ contains the subsequence $a_{s+1}1^2a_{s+1}$, a contradiction.
Therefore $a_{s+1}\notin L(w_1)$ and $1\leq \vert\vert w_1\vert\vert\leq n-1$. Note that $w_1$ is also $abba$-free and $(2,r)$-sparse.
By induction, $\vert w_1\vert\leq r(n-1)$. Suppose $\vert w_1\vert= r(n-1)$. Then $w_1$ must be of the form $b_1^rb_2^r\dots b_{n-1}^r$ and $\{b_1,b_2,\dots ,b_{n-1}\}=[n]\setminus \{a_{s+1}\}$. One of the letters $b_2,b_3,\dots, b_{n-1}$ must be 1, for $b_1\neq 1$. So $u$ contains the subsequence $1b_1^21$, a contradiction.
Hence $\vert w_1\vert< r(n-1)$ and $\vert u\vert=r+\vert w_1\vert<rn$.

Suppose $1\notin L(w_1)$. Again,  by induction, $\vert w_1\vert\leq r(n-1)$. Suppose $\vert w_1\vert= r(n-1)$. Then $w_1$ must be of the form $b_1^rb_2^r\dots b_{n-1}^r$ and $\{b_1,b_2,\dots ,b_{n-1}\}=[n]\setminus \{1\}$. One of the letters $b_1,b_2,b_3,\dots, b_{n-1}$ must be $a_{s+1}$. If $b_1=a_{s+1}$, then $a_{r+1}=\cdots =a_{2r}=a_{s+1}$, and $1^sa_{s+1}\dots a_{r-1}1a_{s+1}^r$ is a consecutive subsequence of length $2r$, contradicting Lemma \ref{lm_pre_exact_value}.

If $b_i=a_{s+1}$ for some $i\geq 2$, then $u$ contains the subsequence $a_{s+1}b_1^2a_{s+1}$, a contradiction. Hence $\vert w_1\vert< r(n-1)$ and $\vert u\vert=r+\vert w_1\vert<rn$.

\noindent
{\bf Case 2.} Suppose $s=r$. The $u$ is of the form
\begin{equation}
1^ra_{r+1}\dots a_{2r}w.\notag
\end{equation}
By Lemma \ref{lm_pre_exact_value}, $1\notin L(a_{r+1}\dots a_{2r})$.

Suppose $1\notin L(w)$. By induction, $\vert w\vert\leq r(n-1)$. Suppose $\vert w\vert= r(n-1)$. Then $w$ must be of the form $2^r3^r\dots n^r$. Hence $\vert u\vert=rn$ and $u=1^r2^r\dots n^r$. If $\vert w\vert< r(n-1)$, then $\vert u\vert<rn$.

Suppose $1\in L(w)$. Then all the letters $1,a_{r+1},\dots ,a_{2r}$ are distinct.

Suppose $w$ does not any contain consecutive subsequence isomorphic to $e^2$. Then the sequence $1a_{r+1}\dots a_{2r}w$ is $abba$-free and 2-sparse. By (\ref{eq2}), $\vert 1a_{r+1}\dots a_{2r}w\vert\leq 3n-2$. So, $\vert u\vert \leq 3n-2+(r-1)<rn$, for $n\geq 3$ and $r\geq 5$.

Suppose $w$ contains a consecutive subsequence isomorphic to $e^2$. We may assume that $w$ is of the form
\begin{equation}
w_1c^2w_2,\notag
\end{equation}
where $w_1,w_2$ are sequences, $c$ is a letter, $\vert w_1\vert\geq 0$ and $\vert w_2\vert\geq 0$. Furthermore, if $\vert w_1\vert >0$, we may assume that $w_1$ does not contain any consecutive subsequence isomorphic to $e^2$. Note that
\begin{equation}
L(1^ra_{r+1}\dots a_{2r}w_1)\cap L(w_2)\subseteq \{c\}.\notag
\end{equation}
Let $L(1^ra_{r+1}\dots a_{2r}w_1)\setminus \{c\}=t_1$ and $L(w_2)\setminus \{c\}=t_2$. Then $t_1+t_2+1=n$. Since the sequence $1a_{r+1}\dots a_{2r}w_1$ is $abba$-free and 2-sparse, by (\ref{eq2}), $\vert 1a_{r+1}\dots a_{2r}w_1\vert\leq 3(t_1+1)-2=3t_1+1$. Thus $\vert 1^ra_{r+1}\dots a_{2r}w_1\vert\leq 3t_1+r$.

Note that $t_1\geq r$, for $a_{r+1},\dots ,a_{2r}$ are distinct. By induction, $\vert c^2w_2\vert\leq r(t_2+1)=r(n-t_1)$. Therefore $\vert u\vert\leq rn-(r-3)t_1+r\leq rn-(r-3)r+r=rn-r^2+4r=rn-r(r-4)<rn$, for $r\geq 5$.

So, the theorem holds for $k=2$. For $k>2$, the theorem follows by noting that $\E_r(abba,k,n)\leq \E_r(abba,2,n)$.
\end{proof}

Theorem \ref{thm_exact_value} leads us to believe that the following conjecture is true.

\begin{con}\label{con_min} Let $t_1,t_2,\dots ,t_s$ be integers and $t=\max(t_1,t_2,\dots, t_s)$. There exists a constant $C$ depending on $t$, such that for every $r\geq C$,
\begin{equation}
\E_r(12^{t_1}3^{t_2}\dots s^{t_{s-1}}(s+1)^{t_s}1,k,n)=rn.\notag
\end{equation}
\end{con}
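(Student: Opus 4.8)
The plan is to prove the matching upper bound $\E_r(v,k,n)\le rn$ for $v=12^{t_1}3^{t_2}\dots(s+1)^{t_s}1$ once $r$ is large; the reverse inequality is already free, since $v$ is not a blow-up of a chain and so (by the discussion preceding Lemma \ref{lm_pre_thm1}) the sequence $1^r2^r\dots n^r$ is $v$-free and $(k,r)$-sparse, giving $\E_r(v,k,n)\ge rn$. Writing $w_0=2^{t_1}3^{t_2}\dots(s+1)^{t_s}$, so that $v=1w_01$ and $\vert\vert v\vert\vert=s+1$, I would first reduce to the minimal sparsity $k=s+1$: since $(k,r)$-sparsity is more restrictive for larger $k$, the function $\E_r(v,k,n)$ is nonincreasing in $k$, so it suffices to prove $\E_r(v,s+1,n)\le rn$, exactly as Theorem \ref{thm_exact_value} reduces the $abba$ case to $k=2$. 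This bound I would attack by strong induction on $n$, mirroring the proof of Theorem \ref{thm_exact_value}. The range $n\le s$ is handled by Lemma \ref{lm_pre_thm1}, and in the inductive step I take a $v$-free, $(s+1,r)$-sparse sequence $u$ of maximal length with $a_1=1$, and let $\ell\le r$ be the length of the initial run of $1$'s, the bound $\ell\le r$ coming from Lemma \ref{lm_pre_exact_value}.

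The engine of the induction is to peel off the first letter. When $\ell=r$ and $1$ does not reappear, $u=1^ru'$ with $\vert\vert u'\vert\vert\le n-1$, and the inductive hypothesis gives $\vert u'\vert\le r(n-1)$, hence $\vert u\vert\le rn$ with equality forcing $u=1^r2^r\dots n^r$. The two genuinely difficult cases are: (i) $\ell<r$, where, as in Case~1 of Theorem \ref{thm_exact_value}, I prepend a $1$, check that $1u$ is still $v$-free (any copy of $v$ in $1u$ that used the new letter could be re-routed through $a_1=1$, contradicting $v$-freeness of $u$), and deduce from maximality that one of the positions $r,2r,\dots,sr$ of $u$ already carries the letter $1$; and (ii) $\ell=r$ with $1$ occurring again after position $r$, where the two flanking $1$'s sandwich a region that must be free of $w_0$ over the letters $\ne 1$. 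Here the single repeated letter $e^2$ of the $abba$-argument is replaced by an occurrence of the whole blow-up chain $w_0$: I would split on whether the tail contains a copy of $w_0$. If it does not, the relevant portion is $v$-free and $2$-sparse, so its length is at most $\E_1(v,\cdot)$; if it does, I fix the first such copy, split $u$ around it into a prefix and a suffix sharing essentially no letters, bound the prefix by $\E_1(v,\cdot)$ and the suffix by the inductive hypothesis.

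Both difficult cases therefore rest on a linear bound $\E_1(v,n)\le C_0\,n$. This I would establish from the observation that, in any $v$-free sequence, the subsequence lying strictly between two consecutive occurrences of a fixed letter $a$ and using only letters $\ne a$ must avoid $w_0$; since $w_0$ is itself a blow-up of a chain (each of its letters occupies a single block, so it contains no interleaved pair and is safely below the superlinear threshold), $\E_1(w_0,\cdot)$ is linear, and feeding this into a charging argument over the gaps between consecutive equal letters yields $\E_1(v,n)=O(n)$. With $C_0$ in hand, the accounting in case (ii) reads $\vert u\vert\le rn-(r-C_0)t_1+O(r)$, where $t_1$ is the number of distinct letters in the prefix; because the $r$ positions following the initial block $1^r$ form an $(s+1)$-sparse, $w_0$-free word, they already use $\gg r$ distinct letters, so $t_1\gg r$ and the correction $-(r-C_0)t_1$ dominates the $O(r)$ overhead once $r>C_0$. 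This forces $\vert u\vert<rn$ in every difficult case and closes the induction.

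The main obstacle is the claim, implicit in the conjecture, that the threshold $C$ may be taken to depend on $t=\max_i t_i$ alone and not on $s$. The route above naturally produces a constant $C_0=C_0(s,t)$ in the linear bound $\E_1(v,n)\le C_0 n$, and hence a threshold $C=C(s,t)$; proving the conjecture as stated requires a bound on $\E_1(v,n)$ whose linear coefficient is uniform in the chain length $s$. I expect this to be the crux: one must show that lengthening the underlying chain $2\,3\cdots(s+1)$ does not increase the per-letter cost of avoiding $v$, which is plausible because each new chain letter simultaneously tightens the $(s+1)$-sparsity constraint, but converting this heuristic into a uniform estimate is the delicate point. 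Short of it, the method still yields $\E_r(v,k,n)=rn$ for all $r\ge C(s,t)$, recovering Theorem \ref{thm_exact_value} as the case $s=1$, $t=2$.
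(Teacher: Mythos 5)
This statement is a conjecture in the paper: the authors give no proof, and the only case they actually establish is $s=1$, $t_1=2$ (Theorem~\ref{thm_exact_value}, $v=abba$, $r\ge 5$). So there is no proof of record to compare yours against; I can only judge the proposal on its own terms. Your skeleton --- the lower bound from $1^r2^r\dots n^r$, reduction to $k=s+1$ by monotonicity in $k$, induction on $n$, peeling off the initial block of $1$'s, and the split according to whether $1$ recurs --- is the natural generalization of Theorem~\ref{thm_exact_value}, and your closing observation that this route can only yield a threshold $C(s,t)$ rather than the conjectured $C(t)$ is correct and worth making. The problem is that the load-bearing step of the $abba$ proof does not survive the generalization. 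There, the two occurrences of $1$ sandwich a region in which every letter appears at most once (a repeat $xx$ completes $1xx1$); this one fact simultaneously gives $t_1\ge r$ (your ``the $r$ positions after the block use $\gg r$ distinct letters'') and the $2$-sparseness needed to bound the prefix by $3t_1+1$ via (\ref{eq2}). For $s\ge 2$ the sandwiched region is only known to avoid $w_0=2^{t_1}\cdots(s+1)^{t_s}$, and that is compatible with long constant runs: $a_{r+1}\cdots a_{2r}$ can be $2^r$, since a one-letter word avoids any $w_0$ with $\vert\vert w_0\vert\vert\ge 2$, and $(s+1,r)$-sparseness constrains only positions whose distance is a multiple of $r$. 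So the number of distinct letters in your prefix can be $O(1)$ rather than $\Omega(r)$, the prefix itself need not be $O(1)$-sparse, and the decisive inequality $\vert u\vert\le rn-(r-C_0)t_1+O(r)<rn$ collapses. This is precisely the obstruction that makes the statement a conjecture rather than a theorem, and the proposal does not supply a substitute idea.

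Two smaller but still genuine gaps. Case (i) ($\ell<r$) is only gestured at: in the paper that case proceeds by showing that a specific letter cannot occur in the tail $w_1$ (so that $\vert\vert w_1\vert\vert\le n-1$ and induction applies), and every one of those deductions again uses ``a repeated letter between two $1$'s completes $abba$''; no analogue is offered for general $v$. And the linear bound $\E_1(v,n)\le C_0n$ is asserted via a one-sentence charging argument that does not work as written: the content of a gap between consecutive occurrences of a letter avoids $w_0$ but need not be sparse (deleting letters destroys sparseness), so its length is not controlled by $\E_1(w_0,\cdot)$ of its alphabet size, and a single letter can lie in many gaps, so summing over gaps over-counts. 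Linearity of $\E_1(awa,n)$ for $w$ a blow-up of a chain over letters other than $a$ is available in the literature the paper cites, but it must be invoked, not rederived in a sentence, and for the conjecture as stated one must additionally control how its constant depends on $s$.
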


\end{document}